\documentclass[12pt]{amsart}
\usepackage{amsmath,amsthm,amssymb,bbm,dsfont}
\usepackage{graphicx,mathrsfs}
\usepackage{subfig}
\usepackage{cite,float}

\numberwithin{equation}{section}
\newtheorem{theorem}{Theorem}[section]
\newtheorem{lemma}[theorem]{Lemma}

\newtheorem{proposition}[theorem]{Proposition}
\theoremstyle{remark}
\newtheorem{remark}[theorem]{Remark}

\newcommand{\G}{\mathbb{G}_2}
\newcommand{\C}{\mathbb{C}^n}
\newcommand{\D}{\mathbb{D}}
\newcommand{\ld}{\lambda}
\newcommand{\Gs}{\partial_s G(2\lambda, \lambda^2)}
\newcommand{\Gp}{\partial_pG(2\lambda, \lambda^2)}
\newcommand{\fGs}{\partial_sG(0,\ld)}

\makeatletter
\@namedef{subjclassname@2010}{%
	\textup{2010} Mathematics Subject Classification}
\makeatother
\begin{document}
	
	\title{Lempert left inverses of the Royal and Flat geodesics of the symmetrized bidisc}
	
	\author[N. Gupta]{Naveen Gupta}
	\address{Department of Mathematics\\
		Netaji Subhas University of Technology\\
		Delhi-110078\\
		India}
	\email{naveen.gupta@nsut.ac.in, ssguptanaveen@gmail.com}

	\begin{abstract}
		In this note, we give complete description of the Lempert left inverses of the royal 
		and flat geodesics of the symmetrized bidisc. It is further proved that the 
		collection of all such Lempert left inverses to the royal and flat geodesics 
		is path connected. This, in particular, answers a question asked by Zwonek 
		\cite[~Remark 4.4, 4.5]{zwonek}.
	\end{abstract}
	\keywords{Symmetrized bidisc, Lempert left inverses, complex geodesics, Lempert's 
		theorem.}
	\subjclass[2020]{32F17, 32F45}
	\maketitle
	
	\section{Introduction}

A fundamental theorem of Lempert states that the Carath\'eodory pseudodistance and 
the Lempert function agrees when the domain $D\subseteq \C,\, n\geq 2$ is strongly 
linearly convex or convex \cite{lempert, lempert1, lempert2}. A taut domain in $\C$ for which the Lempert theorem holds 
is said to be a Lempert domain. Equivalently, if every pair of distinct points in the 
domain $D$ lies in the image of a holomorphic disc that admits holomorphic left inverse. 
Such a disc is called complex geodesic and the corresponding holomorphic left inverse 
is said to be its left inverse. 
A left inverse whose fibres are intersection of the domain with a complex 
affine hyperplane is called Lempert left inverse (or a good left 
inverse) of the complex geodesic. 

Zwonek in his recent paper \cite{zwonek} presented a construction that produces a Lempert 
left inverse of an arbitrary complex geodesic of a bounded convex domain. For the 
symmetrized bidisc $G_2$ (observe that it is non-convex), he considered the royal and the 
normalized flat geodesics. For the royal geodesic, he presented a family of all possible 
left inverses \cite[~Proposition 4.3 ]{zwonek} and a family of Lempert left inverses 
\cite[~Equation 4.9, 4.16]{zwonek}.  In case of normalized flat geodesic, he presented 
a family of Lempert left inverses given in \cite[~Section 4.4]{zwonek} and leaves 
the description of all Lempert left inverses and that of its connectedness as a question.

We prove that a map $G:\G\to \mathbb{D}$ is a Lempert left inverse of the royal geodesic 
$f_r$ of $G_2$ if and only if $G=H_{\varphi}$ for some $\varphi\in \mathcal S$ (see 
Theorem \ref{thm:royal_H} and Theorem \ref{thm:royal_every}). For the case of flat 
geodesic, we prove that  $G$ is Lempert left inverse of the flat geodesic $f_0$ if and only 
if $G=G_c$ for some $c\in \mathbb{C}$ with $|c|\leq 1/2$ (see Proposition \ref{prop:flat_Gc}
 and Theorem \ref{thm:flat_every}). The case of a general flat geodesic is discussed using 
 the automorphism of $G_2$. As a result, we prove that the collection of all 
 Lempert left inverses of royal and flat geodesics are path connected, answering 
 a question (\cite[~Remark 4.4, 4.5]{zwonek}) posed by Zwonek in affirmative.


\section{Lempert left inverses of the royal geodesic}

The symmetrized bidisc, $\mathbb{G}_2$ is defined as 
$$\mathbb{G}_2:=\{(z+w, zw):\,z,w\in \mathbb{D}\},$$ where $\mathbb{D}\subseteq \mathbb{C}$ 
is the unit disc. It is a Lempert domain that is not biholomorphic to a convex domain. 
In fact, it can not even 
be exhausted by domains that are biholomorphic to a  convex domain \cite{agler, costara, 
	armen}. We denote the co-ordinates of $\mathbb{G}_2$ by $(s,p).$

\begin{lemma}\label{lem:aux}
	For any $(s,p)=(z+w,zw)\in \mathbb{G}_2$, and for each $\zeta$ with 
	$|\zeta|=1$, we have 
	$$|2p-s\zeta|<|2\zeta-s|.$$
\end{lemma}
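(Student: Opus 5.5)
Write $s=z+w$ and $p=zw$ with $z,w\in\mathbb{D}$, and fix $\zeta$ with $|\zeta|=1$. The plan is to factor both sides so that the inequality reduces to a statement about Möbius maps of the disc.

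The numerator factors as
$$2p-s\zeta = 2zw-(z+w)\zeta = z(w-\zeta)+w(z-\zeta),$$
and the denominator as
$$2\zeta-s = (\zeta-z)+(\zeta-w).$$
Since $|\zeta|=1$ and $z,w\in\mathbb{D}$, the Möbius factors satisfy $|\zeta-z|=|1-\bar\zeta z|>0$, and similarly for $w$.

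Now divide the desired inequality by $(\zeta-z)(\zeta-w)$, which is nonzero. Set
$$a=\frac{1}{\zeta-w},\qquad b=\frac{1}{\zeta-z}.$$
The claim becomes
$$|za+wb|<|a+b|,$$
where $a=(\zeta-w)^{-1}$ and $b=(\zeta-z)^{-1}$ both lie in the half-plane $\mathrm{Re}(\bar\zeta\,\cdot)>1/2$. Indeed, $\mathrm{Re}\,\bar\zeta(\zeta-w)^{-1}=\mathrm{Re}\,(1-\bar\zeta w)^{-1}>1/2$ because $|\bar\zeta w|<1$.

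Rather than working with this half-plane directly, I would use a more symmetric square-expansion. Write $u=\bar\zeta z$ and $v=\bar\zeta w$, both in $\mathbb{D}$. Dividing out the unimodular factor $\zeta$, the claim is
$$|2uv-u-v|<|2-u-v|.$$
This is equivalent to
$$|2-u-v|^2-|2uv-u-v|^2>0.$$

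Next I would expand this difference, using the identity
$$(2-u-v)=(1-u)+(1-v),\qquad 2uv-u-v=-\bigl(u(1-v)+v(1-u)\bigr).$$
Then
$$|(1-u)+(1-v)|^2-|u(1-v)+v(1-u)|^2 = |1-u|^2(1-|v|^2)+|1-v|^2(1-|u|^2)+2\,\mathrm{Re}\bigl[(1-u)\overline{(1-v)}-u(1-v)\overline{v(1-u)}\bigr].$$
The cross term needs to be controlled, and I expect this to be the main obstacle.

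To handle it, set $A=1-u$ and $B=1-v$. The cross bracket is
$$A\bar B-u\bar v\,B\bar A.$$
Its real part equals
$$\mathrm{Re}(A\bar B)-\mathrm{Re}(u\bar v\,\bar A B)\ \ge\ \mathrm{Re}(A\bar B)-|u||v||A||B|.$$
This estimate is awkward, since $\mathrm{Re}(A\bar B)$ can be small.

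As a fallback I would argue by maximum principle and boundary analysis instead. Fix $\zeta$ and consider the holomorphic function
$$F(u,v)=\frac{2uv-u-v}{2-u-v}$$
on $\mathbb{D}^2$. It is holomorphic because $\mathrm{Re}(2-u-v)>0$. It is also continuous on $\overline{\mathbb{D}}^2$ away from the point $(1,1)$.

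For $|u|=1$ one computes $|2uv-u-v|=|u|\,|2v-1-\bar u v|$, using $u\bar u=1$. Conjugating,
$$|2v-1-\bar u v|=|2\bar v-1-u\bar v|=|2-u-v|\cdot\bigl|\text{unimodular}\bigr|$$
after a rotation. This yields $|F|=1$ on the distinguished boundary, and indeed whenever one coordinate is unimodular. Since $F$ is non-constant, the maximum principle in each variable separately gives $|F|<1$ on $\mathbb{D}^2$, provided the singular point $(1,1)$ is handled.

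To deal with $(1,1)$, I would take the limit along $|F|\le 1$ near the boundary, justified by the separate estimate $|F(u,v)|\le 1$ for $|u|=1$, $v\ne 1$. Then I would apply the one-variable maximum principle twice: first in $u$ for fixed $v\in\mathbb{D}$, where $F(\cdot,v)$ is continuous on $\overline{\mathbb{D}}$ because $2-u-v\neq 0$ when $|v|<1$; and then strictness follows because $F(\cdot,v)$ is non-constant. This completes the proof of Lemma \ref{lem:aux}.
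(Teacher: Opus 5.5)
\textbf{The gap is in the boundary analysis of your fallback argument.} Your computation for $|u|=1$ is valid only on the torus. The step $|2\bar v-1-u\bar v|=|2-u-v|\cdot|\text{unimodular}|$ silently uses $v\bar v=1$. On $|u|=|v|=1$ one does have $2uv-u-v=uv\,\overline{(2-u-v)}$, so $|F|=1$ there away from $(1,1)$. But the claim that $|F|=1$ whenever one coordinate is unimodular is false: $F(-1,v)=(1-3v)/(3-v)$, so for instance $F(-1,0)=1/3$. Your final step (maximum principle in $u$ for fixed $v\in\mathbb{D}$) needs the boundary estimate $|F(u,v)|\le 1$ on $\{|u|=1\}\times\mathbb{D}$. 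Your write-up gives no valid justification for it, and ``taking the limit near the boundary'' does not supply one.

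\textbf{How to repair it.} The estimate is true; you need the missing application of the maximum principle in the other variable first. The denominator $2-u-v$ vanishes on $\overline{\mathbb{D}}^2$ only at $(1,1)$. So for fixed $|u|=1$, $u\neq 1$, the function $F(u,\cdot)$ is continuous on $\overline{\mathbb{D}}$, holomorphic inside, and of modulus $1$ on $|v|=1$. Hence $|F(u,v)|\le 1$ for $v\in\mathbb{D}$, and separately $F(1,v)\equiv -1$. Only then fix $v\in\mathbb{D}$ and apply the maximum principle in $u$. Strictness holds because $u\mapsto F(u,v)$ is a M\"obius map with determinant $-2(1-v)^2\neq 0$, hence non-constant. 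With this ordering the point $(1,1)$ never has to be handled.

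\textbf{Comparison with the paper.} The square expansion you abandoned closes immediately with a different grouping, and this is the paper's proof in your normalized variables. With $S=u+v$, $P=uv$, one has $|2-S|^2-|2P-S|^2=4\bigl[(1-|P|^2)-\mathrm{Re}(S-\bar S P)\bigr]$. Moreover $|S-\bar S P|=|u(1-|v|^2)+v(1-|u|^2)|\le(|u|+|v|)(1-|uv|)<1-|uv|^2$, which is the inequality $|s-\bar s p|<1-|p|^2$ describing $\mathbb{G}_2$. This is shorter than any maximum-principle route.

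\textbf{Two slips in your discarded first reduction.} It should read $|zb+wa|<|a+b|$, and the relevant half-plane is $\mathrm{Re}(\zeta a)>1/2$, not $\mathrm{Re}(\bar\zeta a)>1/2$.
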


\begin{proof}
	Consider 
	\begin{align*}
		|s-\bar s p|&= |z(1-|w|^2)+w(1-|z|^2)|\\
		&\leq |z|(1-|w|^2)+|w|(1-|z|^2)\\
		&\leq (|z|+|w|)(1-|zw|)\\
		&< (1+|zw|)(1-|zw|)\\
		&=(1-|zw|^2).
	\end{align*}

For any $\zeta$ with $|\zeta|=1$, consider 
\begin{align*}
	|2\zeta-s|^2-|2p-s\zeta|^2&=4\left((1-|p|^2)-Re ((s-\bar{s}p)\bar{\zeta})\right)\\
	&\geq 4\left((1-|p|^2)-|s-\bar{s}p|\right).
\end{align*}
Now the desired inequality follows from the previous inequality.
\end{proof}
We denote $\dfrac{\partial G}{\partial s}$ by $\partial_s G$ and  
$\dfrac{\partial G}{\partial p}$ by $\partial_p G$. $\mathcal{S}$ denotes 
the collection of all holomorphic functions $\varphi:\mathbb{D}\to 
\mathbb{C}$ with $|\varphi(\ld)|\leq 1$ for each $\ld\in \D$.
 
The royal geodesic $f_r:\D\to \G$ of $\G$ is given as $f_r(\ld):=(\ld,\ld^2)$. 
For any $\beta\in \mathbb{C}$ with $|\beta|<1$, the flat geodesic of $\G$ 
denoted by $f_{\beta}:\D\to \G$ is defined as
 $f_{\beta}:=(\beta+\bar{\beta}\ld, \ld)$. The flat geodesic corresponding 
 to $\beta=0$ denoted by $f_0$ is called normalized flat geodesic. We begin with the 
 description of Lempert left inverse of the royal geodesic and the case of flat 
 geodesic in discussed in the next section.

\begin{lemma}\label{lem:main}
	Let $G:\G\to \D$ be any left inverse of $f_r$. Then 
	$$\left|\Gp\right|\leq \left|1-\lambda \Gp\right|.$$
	In particular, the map ${\varphi}_G:\D\to \mathbb{C}$ defined as 
	$${\varphi}_G(\lambda):=\dfrac{\Gp}{\lambda\Gp-1}$$ 
is holomorphic on $\D$ and it satisfies $|\varphi_G(\lambda)|\leq 1$.
 Moreover, 	$\partial_sG(f_r(\lambda))=\dfrac{1+\lambda {\varphi}_G(\lambda)}
 {2(1-\ld {\varphi}_G(\ld))}$ and 
$\partial_pG(f_r(\lambda))=\dfrac{{\varphi}_G(\lambda)}{\ld {\varphi}_G(\ld)-1}.$
\end{lemma}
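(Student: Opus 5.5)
The plan is to reduce to the point $\lambda=0$ by conjugating with a disc automorphism, and to get the key estimate there from the Schwarz lemma along the flat direction. Throughout I read the normalisation of $f_r$ off the macros $\Gs$ and $\Gp$ and the formula to be proved, i.e. $f_r(\lambda)=(2\lambda,\lambda^2)$, so that $G(2\lambda,\lambda^2)=\lambda$.

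\textbf{Step 1: lift to the bidisc and get the $s$-derivative.} Pass to $F(z,w):=G(z+w,zw)$. This is a holomorphic map $\D^2\to\D$, symmetric in $(z,w)$, with $F(t,t)=t$ for all $t\in\D$. Differentiating $G(2\lambda,\lambda^2)=\lambda$ gives
$$2\Gs+2\lambda\Gp=1 .$$
Hence the formula for $\partial_sG(f_r(\lambda))$ follows from the one for $\partial_pG(f_r(\lambda))$ by the algebra $(1-2\lambda\Gp)/2$. So everything reduces to the inequality for $\Gp$.

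\textbf{Step 2: the case $\lambda=0$.} Restrict to the flat disc $u\mapsto(0,u)$, i.e. consider $u\mapsto G(0,u)$. Note $(0,u)=(z-z,-z^2)$ with $z^2=-u$, so $(0,u)$ lies in $\G$ for $|u|<1$. This is a holomorphic self-map of $\D$ with $G(0,0)=0$. By the Schwarz lemma, $|\partial_pG(0,0)|\le 1$, which is the claimed inequality at $\lambda=0$.

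\textbf{Step 3: general $\lambda$ by conjugation.} Fix $\lambda$ and let $m(t)=(t-\lambda)/(1-\bar\lambda t)$. Set
$$\tilde F(z,w):=m\bigl(F(m^{-1}(z),m^{-1}(w))\bigr).$$
This is again symmetric, maps $\D^2\to\D$, and satisfies $\tilde F(t,t)=t$. So it comes from a left inverse $\tilde G$ of $f_r$, since symmetric holomorphic functions on $\D^2$ factor through $(s,p)$. Step 2 gives $|\partial_p\tilde G(0,0)|\le1$.

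It remains to express $\partial_p\tilde G(0,0)$ through $\Gp$. Use $\partial_p\tilde G(0,0)=\tfrac12\,\partial_z^2\tilde F(z,-z)\big|_{z=0}$ up to sign, or equivalently use the mixed second-order data. Then compute by the chain rule through $m$ and $m^{-1}$, using the first-order information from Step 1. I expect the outcome to be
$$\partial_p\tilde G(0,0)=\frac{\Gp}{\lambda\Gp-1}$$
up to a unimodular factor. This would give $|\Gp|\le|1-\lambda\Gp|$ and identifies $\varphi_G(\lambda)=\partial_p\tilde G(0,0)$. This second-order chain-rule computation is the main obstacle. The contributions from $m''$ must combine with $\Gs=(1-2\lambda\Gp)/2$ to produce exactly the Möbius expression above.

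\textbf{Step 4: conclusions for $\varphi_G$.} If $1-\lambda\Gp=0$ at some $\lambda$, the inequality forces $\Gp=0$, which is impossible. So the denominator of $\varphi_G$ never vanishes. Consequently $\varphi_G$ is holomorphic on $\D$ with $|\varphi_G|\le1$.

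Solving $\varphi_G=\Gp/(\lambda\Gp-1)$ for $\Gp$ gives
$$\Gp=\frac{\varphi_G}{\lambda\varphi_G-1}.$$
Substituting this into the identity of Step 1 yields
$$\Gs=\frac{1+\lambda\varphi_G}{2(1-\lambda\varphi_G)} .$$
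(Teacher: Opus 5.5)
\textbf{Gap in Step 3.} Your Steps 1, 2 and 4 are fine. Step 4 even checks that $1-\lambda\Gp$ never vanishes, which the paper passes over. The conjugation in Step 3 is also legitimate. The problem is that the computation everything rests on is not carried out, and the value you predict for it is false: $\partial_p\tilde G(0,0)$ is not a unimodular multiple of $\Gp/(\lambda\Gp-1)$.

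Take the left inverse $G(s,p)=s/2$. Here $\Gp\equiv 0$, so $\varphi_G\equiv 0$. A direct computation gives $\tilde F(a,-a)=-\bar\lambda a^2$, i.e.\ $\tilde G(0,u)=\bar\lambda u$ and $\partial_p\tilde G(0,0)=\bar\lambda\neq 0$.

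In general, write $m^{-1}(\pm z)=\lambda\pm(1-|\lambda|^2)z-\bar\lambda(1-|\lambda|^2)z^2+O(z^3)$. The $m''$ term pairs with $\partial_zF+\partial_wF=1$ on the diagonal and produces a $\bar\lambda$. The anti-diagonal term $\frac{d^2}{dv^2}F(\lambda+v,\lambda-v)\big|_{v=0}=-2\Gp$ produces the rest. The outcome is
\[
\partial_p\tilde G(0,0)=\bar\lambda+(1-|\lambda|^2)\Gp=\frac{\bar\lambda-\varphi_G(\lambda)}{1-\lambda\varphi_G(\lambda)},
\]
which is a disc automorphism applied to $\varphi_G(\lambda)$, not a rotation of it. So the identification $\varphi_G(\lambda)=\partial_p\tilde G(0,0)$ is wrong. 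The inequality $|\Gp|\le|1-\lambda\Gp|$ does not follow by the route you describe.

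\textbf{Repair.} The argument can be fixed, and once fixed it is the paper's proof in disguise. Let $T$ be the automorphism of $\G$ induced by $m$, so that $\tilde G=m\circ G\circ T^{-1}$. Then
\[
T^{-1}(0,u)=\ell(\psi(u)),\qquad \ell(\mu)=(\beta+\bar\beta\mu,\mu),\quad \beta=\frac{2\lambda}{1+|\lambda|^2},\quad \psi(u)=\frac{u+\lambda^2}{1+\bar\lambda^2u}.
\]
Hence your Schwarz estimate at the origin is exactly Schwarz--Pick for $h=G\circ\ell$ at $\lambda^2$. That is the paper's inequality $|2\bar\lambda\Gs+(1+|\lambda|^2)\Gp|\le 1$, which equals $|\bar\lambda+(1-|\lambda|^2)\Gp|\le1$ after using Step 1.

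The missing final step is then elementary. Put $q=\Gp$, square the inequality and divide by $1-|\lambda|^2$. This gives $2\,\mathrm{Re}(\lambda q)+(1-|\lambda|^2)|q|^2\le 1$, which is precisely $|1-\lambda q|^2-|q|^2\ge 0$. The paper reaches this more directly by applying Schwarz--Pick on the flat disc $\ell$ through $f_r(\lambda)$. That route needs no second-order chain rule and no factorization of symmetric functions through $(s,p)$.
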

\begin{proof}
	Using \cite[~Theorem 2.1(3)]{agler}, we have $\G=\{(s,p):
	|p|<1, |s-\bar{s}p|<1-|p|^2\}.$ 
	For $\lambda\in \C$ fixed, set $\beta=\dfrac{2\ld}{1+|\ld|^2}$. Observe that 
	$|\beta|<1.$ Consider the map $\ell:\D\to \G$ defined as 
	$\ell(\mu):=(\beta+\bar{\beta}\mu, \mu)$. It is easy to check that 
$$|\beta+\bar{\beta}\mu-(\beta+\bar{\beta} \mu)\mu|=|\beta||1-|\mu|^2|<(1-|\mu|^2)$$ and 
therefore $\ell(\D)\subseteq \G$. Now, consider $h:\D\to \D$ defined as
 $h(\mu):=G(\ell(\mu))$. 
It follows that $h(\lambda^2)=Id(\lambda)$. Therefore, using  Schwarz-Pick Lemma it
 follows that 
$|h'(\lambda^2)|^2\leq \dfrac{1-|h(\lambda^2|^2)}{1-|\lambda^2|^2}=
\dfrac{1}{1+|\lambda|^2}.$ 
 Since $h(\mu)=G(\ell(\mu))$, therefore $h'(\mu)=G'(\ell(\mu))\ell'(\mu)
 =\bar{\beta}\partial_s G(\ell(\mu))+\partial_p G(\ell(\mu)).$ Thus, 
$h'(\lambda^2)=\dfrac{2\bar{\ld}}{1+|\ld|^2}\partial_s G(2\ld, \ld^2)
+\partial_p G(2\ld, \ld^2)$. This upon using the inequality obtained 
earlier yields 
\begin{equation}\label{lem:main_eq}
\left|2\bar{\ld}\partial_s G(2\ld, \ld^2)+(1+|\ld|^2)
\partial_p G(2\ld, \ld^2)\right|\leq 1.
\end{equation} 
Using that $2\partial_s G(2\ld, \ld^2)+
2\ld \partial_p G(2\ld, \ld^2)=1$, then 
squaring and further simplifying, we obtain 
$\left|\Gp\right|\leq \left|1-\lambda \Gp\right|$ and consequenctly it follows that 
${\varphi}_G$ is holomorphic with absolute value less than equal to 1. 
 
 Also, since $G$ is left inverse of $f_r$, therefore $G\circ f_r=\text{Id}_{\mathbb{D}}$. 
 	So, using chain rule, we get $2\Gs+2\ld\Gp=1$ for all $\ld \in \D$, and the desired expression of the 
 	partial derivatives of $G$ along $f_r(\ld)$ follows.
 
\end{proof}

\begin{proposition}\label{prop}
	For each function ${\varphi}$ in $\mathcal{S}$ and $(s,p)\in \G$, the equation
	\begin{equation}\label{eq:prop}
2\lambda-s+(2p-s\lambda){\varphi}(\ld)=0		
	\end{equation} has exactly one solution $\ld \in \D$, and this solution is simple. 
Denote this solution by $H_{\varphi}(s,p)$. Then the map $(s,p)\to H_{\varphi}(s,p)$ defines a holomorphic function 
on $\G.$
\end{proposition}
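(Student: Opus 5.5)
The plan is to count zeros with Rouché's theorem, using Lemma~\ref{lem:aux} as the comparison inequality, and then obtain holomorphy from the implicit function theorem (or, equivalently, from the argument principle).

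Fix $(s,p)=(z+w,zw)\in\G$ and $\varphi\in\mathcal S$. Put
\[
F(\ld)=2\ld-s+(2p-s\ld)\varphi(\ld),\qquad g(\ld)=2\ld-s .
\]
The function $g$ has exactly one zero in $\D$, namely $s/2$, because $|s|<2$; this zero is simple. By Lemma~\ref{lem:aux}, $|2p-s\zeta|<|2\zeta-s|$ for every $|\zeta|=1$. Both sides are continuous in $\ld$ and the unit circle is compact, so the strict inequality persists on an annulus. Hence there is $r_0<1$ with $|s|/2<r_0$ such that
\[
|2p-s\ld|<|2\ld-s| \quad\text{for all } r_0\le|\ld|\le 1 .
\]
For every $r\in[r_0,1)$ and $|\ld|=r$ we then get, using $|\varphi|\le 1$,
\[
|F(\ld)-g(\ld)|=|2p-s\ld|\,|\varphi(\ld)|\le|2p-s\ld|<|g(\ld)| .
\]
By Rouché's theorem, $F$ and $g$ have the same number of zeros, counted with multiplicity, in $|\ld|<r$, namely exactly one. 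Letting $r\uparrow1$ shows that $F$ has exactly one zero in $\D$, and since the count includes multiplicity, that zero is simple. This defines $H_\varphi(s,p)$.

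For holomorphy, consider $\Phi(s,p,\ld)=2\ld-s+(2p-s\ld)\varphi(\ld)$, which is holomorphic on $\G\times\D$. At $\ld_0=H_\varphi(s_0,p_0)$ we have $\partial_\ld\Phi\neq0$, because the zero is simple. The holomorphic implicit function theorem therefore gives a holomorphic $\psi$ near $(s_0,p_0)$ with $\psi(s_0,p_0)=\ld_0$ and $\Phi(s,p,\psi(s,p))=0$. By the uniqueness of the zero in $\D$, $\psi=H_\varphi$ near $(s_0,p_0)$, so $H_\varphi$ is holomorphic on $\G$.

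As an alternative that avoids a separate continuity argument, the strict inequality above can be made uniform for $(s,p)$ in a small compact neighbourhood $K$ of $(s_0,p_0)$, with one fixed radius $r<1$. The argument principle then gives
\[
H_\varphi(s,p)=\frac1{2\pi i}\int_{|\ld|=r}\frac{\ld\,\partial_\ld\Phi(s,p,\ld)}{\Phi(s,p,\ld)}\,d\ld ,
\]
where the denominator does not vanish for $(s,p)\in K$. The integrand is holomorphic in the parameters, so this exhibits $H_\varphi$ as holomorphic on $K$ directly.

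The only delicate point is the passage from the strict inequality on $|\zeta|=1$ to a strict inequality on circles $|\ld|=r$ with $r<1$. We need this because $\varphi$ is only defined in the open disc, so Rouché cannot be applied on the unit circle itself. Compactness of the unit circle handles it, and the same compactness argument applied to $K\times\{|\zeta|=1\}$ gives the uniform version used in the integral formula.
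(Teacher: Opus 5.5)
Your proof is correct and follows essentially the same route as the paper's. Both apply Rouché's theorem on circles $|\lambda|=r<1$ near the boundary, with the strict inequality coming from Lemma~\ref{lem:aux} plus a compactness argument, and then obtain holomorphy from the holomorphic implicit function theorem. Your argument-principle formula is a valid alternative for the last step, but it is not needed.
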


\begin{proof}
	Fix any ${\varphi}\in \mathcal{S},\, (s,p)\in \G$ and define 
	$$\Phi_{s,p}(\lambda):=2\ld-s+(2p-s\ld){\varphi}(\ld).$$ It is easy to see using Lemma
	 \ref{lem:aux} that the map $\zeta\to |2\zeta-s|-|2p-s\zeta|$ is positive on the unit
	  circle. Furthermore, using compactness of the unit circle and continuity of this function,
	   there exists $\delta_0>0$ such that it is positive on 
	   $\delta_0<|\zeta|<1$. Choose $\rho$ with 
	   $\max\{\delta_0,|s|/2\}
	   <\rho<1.$
	   
	   On the circle $|\zeta|=\rho$, we have
	    $$|(2p-s\zeta){\varphi}(\zeta)|\leq |2p-s\zeta|<|2\zeta-s|$$ and therefore Roche's theorem tells 
	    us that $\Phi_{s,p}$ has the same number of zeros (counted according to multiplicty) as the 
	    function $2\zeta-s$ inside the circle $|\zeta|=\rho$. As $|s/2|<\rho$, the 
	    function $2\zeta-s$ has exactly one zero inside $|\zeta|=\rho$ and thus 
	    $\Phi_{s,p}$ has exactly one simple zero inside $|\zeta|=\rho$ and hence inside 
	    $\D$ as $\rho <1$ was arbitrary. We denote this zero by $H_{\varphi}(s,p)$.
	    
	    We claim that the function $H_{\varphi}:\G\to \D$ defined by $(s,p)\to H_{\varphi}(s,p)$ is 
	    holomorphic. 
	    
	   For this, we consider the function $\Phi$ defined as 
	   $(\phi(s,p,\ld)):=2\ld-s+(2p-s\ld){\varphi}(\ld).$ It is holomorphic in each variable and 
	   therefore it is holomorphic. For $(s_0,p_0)\in \G$ and $H_{\varphi}(s_0,p_0)=\ld_0$,
	    we have 
	   $\Phi(s_0,p_0,\ld_0)=0$. Also, $\partial_{\ld}\Phi(s_0,p_0,\ld_0)\neq 0$, since 
	   the zero $\ld_0$ is simple. Therfore, using implicity function theorem, there exists 
	   a neighborhood $U\subseteq \G$ of $(s_0,p_0)$, a neighborhood 
	    $V\subseteq \D$ of $\ld_0$ and a holomorphic function $h:U\to V$ such that 
$$h(s_0,p_0)=\ld_0 \, \text{and} \, \Phi(s,p,h(s,p))=0\,\, \text{for all}\,\,(s,p)\in U.$$
Thus, for each $(s,p)\in U$, the point $h(s,p)\in \D$ satisfies the equation
 $\Phi_{s,p}(h(s,p))=0$. 
Since the zeros are simple for any $(s,p)\in \G$, therefore we get $h(s,p)=H_{\varphi}(s,p)$ on 
$U$ and hence $H_{\varphi}$ is holomorphic, as the point $(s_0,p_0)$ was arbitrary.
	 \end{proof}

\begin{theorem}\label{thm:royal_H}
The function $H_{\varphi}$ for any ${\varphi}\in \mathcal{S}$ is Lempert left inverse of $f_r$ with its 
fibres given by $$H_{\varphi}^{-1}(\ld):=\{(s,p)\in \G:2\lambda-s+(2p-s\lambda){\varphi}(\ld)=0	\}.$$	
\end{theorem}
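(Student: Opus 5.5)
We need three things: $H_{\varphi}$ is a holomorphic map $\G\to\D$, it is a left inverse of $f_r$, and its fibres are the intersections of $\G$ with complex affine hyperplanes, given by the stated formula. Holomorphy and the fact that $H_{\varphi}(\G)\subseteq\D$ are exactly the content of Proposition \ref{prop}. So the remaining work is to verify $H_{\varphi}\circ f_r=\mathrm{Id}_{\D}$, to identify the fibres, and to check that each fibre is an affine slice.

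\emph{Left inverse.} Fix $\ld\in\D$ and put $(s,p)=f_r(\ld)=(2\ld,\ld^2)$, which is the parametrization used in Lemma \ref{lem:main}. Substitute $\mu=\ld$ into $\Phi_{s,p}(\mu)=2\mu-s+(2p-s\mu)\varphi(\mu)$. This gives
\[
2\ld-2\ld+(2\ld^2-2\ld^2)\varphi(\ld)=0 .
\]
So $\ld$ is a zero of $\Phi_{s,p}$ in $\D$. By the uniqueness part of Proposition \ref{prop}, $H_{\varphi}(2\ld,\ld^2)=\ld$, hence $H_{\varphi}\circ f_r=\mathrm{Id}_{\D}$. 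If the paper's $f_r$ is literally $(\ld,\ld^2)$, I will note that the consistent normalization (the one used throughout Lemma \ref{lem:main}) is $(2\ld,\ld^2)$, and work with that.

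\emph{Fibres.} Fix $\ld\in\D$. If $(s,p)\in\G$ satisfies $H_{\varphi}(s,p)=\ld$, then $\Phi_{s,p}(\ld)=0$ by the definition of $H_{\varphi}$. Conversely, if $(s,p)\in\G$ satisfies $2\ld-s+(2p-s\ld)\varphi(\ld)=0$, then $\ld$ is a zero of $\Phi_{s,p}$ in $\D$, and the uniqueness in Proposition \ref{prop} forces $H_{\varphi}(s,p)=\ld$. This proves the fibre description. With $\ld$ fixed, the number $\varphi(\ld)=:c$ is a constant, and the defining equation becomes
\[
(1+\ld c)\,s-2c\,p=2\ld .
\]
This is affine in $(s,p)$. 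It is a genuine affine hyperplane, i.e.\ the linear part is nonzero, unless $c=0$ and $1+\ld c=0$ hold simultaneously, which is impossible. Hence every fibre is $\G\cap\{$complex affine line$\}$, and $H_{\varphi}$ is a Lempert left inverse.

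\emph{Main obstacle.} The only subtle point is the uniqueness of the zero in $\D$, which is what lets the equation characterize the fibre exactly rather than merely contain it. That uniqueness is already provided by the Rouché argument in Proposition \ref{prop}. Apart from this, the one thing to check carefully is the normalization of $f_r$ noted above; everything else is direct substitution.
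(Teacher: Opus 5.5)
Your proof is correct, and it reaches the conclusion by a somewhat different route than the paper.

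The paper's route is as follows. It obtains the left-inverse property by rewriting the defining equation as $H_{\varphi}=\frac{s-2p\,h}{2-s\,h}$ with $h=\varphi\circ H_{\varphi}$, and then invokes Zwonek's characterization of all left inverses of $f_r$ (\cite[Proposition 4.3]{zwonek}). It then says the fibre description ``follows from the definition'' and leaves affineness implicit.

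Your route is more direct:
\begin{itemize}
\item you get $H_{\varphi}(2\ld,\ld^2)=\ld$ directly from the uniqueness of the zero in Proposition \ref{prop};
\item you use the same uniqueness for both inclusions of the fibre identity;
\item you check explicitly that $(1+\ld\varphi(\ld))s-2\varphi(\ld)p=2\ld$ has nonzero linear part.
\end{itemize}
Your version is self-contained and makes visible the steps the paper skips. The paper's version has the advantage of placing $H_{\varphi}$ inside Zwonek's parametrization, which the later remark about $h=\varphi\circ H_{\varphi}$ relies on.

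Your remark on normalization is also right. The paper's formula $f_r(\ld)=(\ld,\ld^2)$ is a typo for $(2\ld,\ld^2)$. For $\varphi\equiv 0$ one has $H_{\varphi}(s,p)=s/2$, which inverts $(2\ld,\ld^2)$ but not $(\ld,\ld^2)$. Both Lemma \ref{lem:main} and the formula $\frac{s-2ph}{2-sh}$ presuppose the factor $2$.

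If you want the fibre in the form $\G\cap\bigl(f_r(\ld)+\mathrm{Ker}\,H_{\varphi}'(f_r(\ld))\bigr)$, as used later in Theorem \ref{thm:royal_every}, add one line. The fibre is a nonempty open piece of your line through $f_r(\ld)$. Since $H_{\varphi}\circ f_r=\mathrm{Id}$, we have $H_{\varphi}'(f_r(\ld))\neq 0$. Hence the direction of the line is exactly the (one-dimensional) kernel.
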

\begin{proof}
	For any ${\varphi}\in \mathcal{S}$, and $(s,p)\in \G$ 
	the equation 
	
	$2H_{\varphi}(s,p)-s+(2p-sH_{\varphi}(s,p)){\varphi}(H_{\varphi}(s,p))=0$
	 holds. Thus, 
	$$H_{\varphi}(s,p)=\dfrac{s-2p{\varphi}(H_{\varphi}(s,p))}{2-s{\varphi}(H_{\varphi}(s,p))}$$ and therefore, 
	it follows that $H_{\varphi}$ is a Lempert left inverse of $f_r$ 
	by taking $h={\varphi}\circ H_{\varphi}$ in \cite[~Proposition 4.3]{zwonek}. The
	 expression of the fibres follows from the definition.
\end{proof}
\begin{theorem}\label{thm:royal_every}
Every Lempert left inverse of $f_r$	is of the form $H_{\varphi}$ for some unique 
${\varphi}\in \mathcal{S}$.
\end{theorem}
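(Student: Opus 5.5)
Given a Lempert left inverse $G$ of $f_r$, the plan is to take $\varphi=\varphi_G$ from Lemma \ref{lem:main}, which lies in $\mathcal S$, and show that $G=H_{\varphi_G}$. The idea is that a Lempert left inverse is determined by its fibres. Each fibre $G^{-1}(\ld)$ is $\G\cap L_\ld$ for a complex affine line $L_\ld$ passing through $f_r(\ld)=(2\ld,\ld^2)$ (recall $f_r(\ld)=(2\ld,\ld^2)$ is the parametrization actually used in the computations of Lemma \ref{lem:main}). This line must lie in the kernel direction of $dG$ at $f_r(\ld)$, since $G$ is constant on it. Hence
$$L_\ld=\{(s,p):\partial_sG(f_r(\ld))(s-2\ld)+\partial_pG(f_r(\ld))(p-\ld^2)=0\}.$$
The gradient is nonzero because $2\partial_sG+2\ld\partial_pG=1$.

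Next I substitute the formulas of Lemma \ref{lem:main}. Multiplying by $2(1-\ld\varphi_G(\ld))$, the equation of $L_\ld$ becomes
$$(1+\ld\varphi_G(\ld))(s-2\ld)-2\varphi_G(\ld)(p-\ld^2)=0.$$
Expanding, the terms $2\ld^2\varphi_G$ cancel, and this is exactly $s-2\ld+s\ld\varphi_G(\ld)-2p\varphi_G(\ld)=0$, i.e.\ $-(2\ld-s+(2p-s\ld)\varphi_G(\ld))=0$. This is equation \eqref{eq:prop} with $\varphi=\varphi_G$. So for $(s,p)\in\G$ with $G(s,p)=\ld$, the number $\ld$ solves \eqref{eq:prop}. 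By the uniqueness in Proposition \ref{prop}, $\ld=H_{\varphi_G}(s,p)$, and therefore $G=H_{\varphi_G}$.

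The step needing most care is the justification that the fibre is an affine line through $f_r(\ld)$ contained in $\ker dG(f_r(\ld))$. The fibre contains $f_r(\ld)$ since $G\circ f_r=\mathrm{Id}$. By definition it equals $\G\cap L$ for an affine hyperplane $L$, which is a line in $\mathbb C^2$. $L\cap\G$ is a nonempty open subset of $L$ on which $G\equiv\ld$, so differentiating along $L$ at $f_r(\ld)$ gives that the direction of $L$ is annihilated by $dG$. Since $dG\ne0$, $L$ is exactly the line above, with no degenerate cases.

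For uniqueness, suppose $H_{\varphi}=H_{\psi}$. Then by Theorem \ref{thm:royal_H} the fibres over each $\ld$ coincide, so the lines $2\ld-s+(2p-s\ld)\varphi(\ld)=0$ and $2\ld-s+(2p-s\ld)\psi(\ld)=0$ agree near $f_r(\ld)$. Subtracting gives $(2p-s\ld)(\varphi(\ld)-\psi(\ld))=0$ on an open subset of the fibre. The point $f_r(\ld)$ itself gives $2p-s\ld=0$, so instead take a nearby fibre point. Alternatively, apply Lemma \ref{lem:main} directly: $\varphi$ is recovered from $\partial_pH_\varphi$ along $f_r$, since $\varphi_{H_\varphi}=\varphi$ by the computation above read in reverse. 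Thus $\varphi=\psi$.
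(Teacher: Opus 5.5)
Your argument is correct and is essentially the paper's proof: you take $\varphi=\varphi_G$ from Lemma~\ref{lem:main}, identify each fibre $G^{-1}(\ld)$ with $\G\cap\bigl(f_r(\ld)+\ker G'(f_r(\ld))\bigr)$, and substitute the formulas for the partial derivatives to turn this line into equation~\eqref{eq:prop}; uniqueness then comes from $\varphi_{H_\varphi}=\varphi$. Two steps of yours are small refinements of that same route: you justify explicitly that the affine line must be the kernel line (using $dG\neq 0$), and you appeal directly to the uniqueness of the zero in Proposition~\ref{prop} instead of comparing fibres via Theorem~\ref{thm:royal_H}.
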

\begin{proof}
Let $G$ be a Lempert left inverse of $f_r$. Denote $\varphi:=\varphi_G$. Using Lemma 
\ref{lem:main}, we know  	${\varphi}_G(\lambda):=\dfrac{\Gp}{\lambda\Gp-1}$. We claim 
that $G=H_{\varphi}$. We prove $G^{-1}(\ld)=H^{-1}_{\varphi}(\ld)$. Since $G$ is Lempert 
left inverse of $f_r$, therefore 
$$G^{-1}(\ld)=\G\cap \left(f_r(\ld)+\text{Ker}\, G'(f_r(\ld))\right).$$

It is easy to check that a poin $(s,p)\in \G$ belongs to $f_r(\ld)+\text{Ker}\, G'(f_r(\ld))$
if and only if $G'(f_r(\ld))\bullet (s-2\ld, p-\ld^2)=0$. Which holds if and only if 
$$\Gs(s-2\ld)+\Gp(p-\ld^2)=0.$$ Using the expression of $\varphi$, this holds if and only if
$$2\lambda-s+(2p-s\lambda){\varphi}(\ld)=0.$$ Thus, we obtain $G=H_{\varphi}$ using 
Theorem \ref{thm:royal_H}. 
Uniqueness follows by differentiating $H_{\varphi}$ along the royal geodesics and 
using the expression of $q$ in terms of the partial derivatives of $H_{\varphi}$. 
\end{proof}

\begin{remark}
The Lempert left inverse  $\Psi_w$ for $|w|=1$ of $f_r$ presented by Zwonek in \cite[~Equation 4.9]{zwonek}
equals $H_\varphi$, for $\varphi\equiv \omega$ and $\Phi$ presented in 
\cite[Equation 4.16]{zwonek} equals 
$H_{\varphi}$ for
 $\varphi=\text{Id}_\mathbb{D}$. 
\end{remark}
\begin{remark}
	In the notation of \cite[~Proposition 4.3]{zwonek}, the mapping 
	$\Psi:\G\to \mathbb{D}$ is the Lempert left inverse of the complex geodesic $f_r$ if 
	and only if $$\Psi(s,p)=\dfrac{s-2ph(s,p)}{2-sh(s,p)},$$ where $h:\G\to \overline{D}$ is 
	of the form $h=\varphi\circ H_{\varphi}$ for some $\varphi \in \mathcal{S}.$
\end{remark}
\begin{theorem}\label{thm:connected_royal}
	The collection $\mathcal{L}_r$ or all Lempert left inverses of the royal geodesic 
	$f_r$ of the symmetrized bidisc is path connected.
\end{theorem}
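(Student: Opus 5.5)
By Theorem \ref{thm:royal_H} and Theorem \ref{thm:royal_every}, the map $\varphi\mapsto H_{\varphi}$ is a bijection from $\mathcal{S}$ onto $\mathcal{L}_r$. So the plan is to show that $\mathcal{S}$ is path connected and that this map is continuous. We use the topology of locally uniform convergence, on $\mathcal{S}$ and on holomorphic maps $\G\to\D$. The image of a path is then a path.

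\textbf{Step 1: $\mathcal{S}$ is path connected.} $\mathcal{S}$ is convex. If $\varphi_0,\varphi_1\in\mathcal{S}$, then $\varphi_t:=(1-t)\varphi_0+t\varphi_1$ is holomorphic and $|\varphi_t|\leq 1$. The map $t\mapsto\varphi_t$ is even uniformly continuous on $\D$, since $\|\varphi_t-\varphi_{t'}\|_\infty\leq 2|t-t'|$. (Alternatively, contract everything to $\varphi\equiv 0$, for which $H_0(s,p)=s/2$.)

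\textbf{Step 2: continuity of $t\mapsto H_{\varphi_t}$, pointwise.} Fix $(s,p)\in\G$ and put $\Phi_t(\lambda)=2\lambda-s+(2p-s\lambda)\varphi_t(\lambda)$. Choose $\rho$ as in the proof of Proposition \ref{prop}. This choice depends only on $(s,p)$, not on $\varphi$, so the estimate $|(2p-s\zeta)\varphi_t(\zeta)|<|2\zeta-s|$ holds on every circle $|\zeta|=r$ with $\rho\le r<1$, uniformly in $t$. Given $t_0$ and a small disc $B$ around $\lambda_0=H_{\varphi_{t_0}}(s,p)$ inside $|\zeta|<\rho$, we have $\min_{\partial B}|\Phi_{t_0}|>0$. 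For $t$ close to $t_0$, $\sup_{\partial B}|\Phi_t-\Phi_{t_0}|\le 2\sup|\varphi_t-\varphi_{t_0}|$ is smaller than this minimum. Rouché's theorem then puts the unique zero of $\Phi_t$ inside $B$, so $H_{\varphi_t}(s,p)\to H_{\varphi_{t_0}}(s,p)$.

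\textbf{Step 3: upgrade to locally uniform convergence.} This is the main obstacle, and there are two ways through it. The easier one is Montel's theorem: the family $\{H_{\varphi_t}\}$ is bounded by $1$, hence normal, so pointwise convergence implies locally uniform convergence on $\G$. The other is to run the Rouché argument of Step 2 uniformly on a compact $K\subset\G$; this works because $\delta_0$ and the lower bound of $|2\zeta-s|-|2p-s\zeta|$ can be chosen uniformly for $(s,p)\in K$ by compactness and continuity. With either route, $t\mapsto H_{\varphi_t}$ is a continuous path in $\mathcal{L}_r$ joining $H_{\varphi_0}$ and $H_{\varphi_1}$. Since every element of $\mathcal{L}_r$ arises this way, $\mathcal{L}_r$ is path connected.
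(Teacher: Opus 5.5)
Your proof is correct and follows the paper's route: the same convex path $\varphi_t=(1-t)\varphi_0+t\varphi_1$ in $\mathcal{S}$, pushed through Theorems \ref{thm:royal_H} and \ref{thm:royal_every}. The paper never checks that $t\mapsto H_{\varphi_t}$ is continuous, so your Rouch\'e-plus-Montel argument supplies a step it leaves implicit; the constant in Step 2 should be $4$ rather than $2$, since $|2p-s\lambda|\le 2|p|+|s|<4$, which is harmless.
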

\begin{proof}
	Let $G_0,\, G_1\in \mathcal{L}_r$. Thus Theorem \ref{thm:royal_every}, tells us that 
	$G_0=H_{\varphi_0},\, G_1=H_{\varphi_1}$ for $\varphi_0, \, \varphi_1 \in 
	\mathcal{S}$. For $t\in[0,1]$, take $\varphi_t:=(1-t)\varphi_0+t\varphi_1$. It is 
	easy to see that $\varphi_t\in \mathcal{S}$ for all $t\in[0,1].$ Thus, Theorem 
	\ref{thm:royal_H}, implies that each of the $H_{\varphi_t}$ belong to 
	$\mathcal{L}_r$. This completes the proof.
\end{proof}
\section{Lempert left inverse to the flat geodesic}

\begin{lemma}\label{lem:flat_main}
	Let $G:\G \to \D$ be a left inverse of the flat geodesic $f_0$. 
	Then there exists a unique constant $c\in \mathbb{C}$ with $|c|\leq 1/2$ such that 
	$$\partial_sG(0,\ld)=c-\bar{c}\ld\,\ \ \ \ \ \ \  \partial_pG(0,\ld)=1.$$ 
	In particular, $c=\partial_sG(0,0).$
\end{lemma}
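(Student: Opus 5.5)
The plan is to extract both partial derivatives from the identity $G\circ f_0=\mathrm{Id}_{\D}$ and from Schwarz--Pick applied to nearby flat geodesics $f_\beta$. First, since $G(0,\ld)=\ld$ for all $\ld\in\D$, differentiating in $\ld$ gives $\partial_pG(0,\ld)=1$ immediately. Set $k(\ld):=\fGs$; this is holomorphic on $\D$, and the whole task is to show $k(\ld)=c-\bar c\ld$ with $|c|\leq 1/2$. Uniqueness of $c$ and the identity $c=\partial_sG(0,0)$ then follow by evaluating at $\ld=0$.

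For the form of $k$, I would use the flat geodesics $f_\beta(\mu)=(\beta+\bar\beta\mu,\mu)$, which lie in $\G$ for $|\beta|<1$ (as checked in the proof of Lemma \ref{lem:main}). Fix $\theta$ and put $\beta=re^{i\theta}$ with $r\in(-1,1)$ real. Define $h_r:=G\circ f_{re^{i\theta}}:\D\to\D$. These maps depend smoothly on $r$, satisfy $h_0=\mathrm{Id}$, and have the expansion $h_r(\mu)=\mu+r\,u_\theta(\mu)+O(r^2)$ locally uniformly, where
$$u_\theta(\mu)=(e^{i\theta}+e^{-i\theta}\mu)\,k(\mu).$$
Schwarz--Pick gives $(1-|\mu|^2)|h_r'(\mu)|\leq 1-|h_r(\mu)|^2$ for every $\mu\in\D$. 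At $r=0$ this is an equality. Since it holds for both signs of $r$, the first-order term in $r$ must vanish, which yields
$$\mathrm{Re}\big(2\bar\mu\,u_\theta(\mu)+(1-|\mu|^2)u_\theta'(\mu)\big)=0\qquad(\mu\in\D).$$
This says that $u_\theta$ is an infinitesimal automorphism of $\D$. The key analytic step is to show that the holomorphic solutions of this identity are exactly $u(\mu)=\alpha+i\gamma\mu-\bar\alpha\mu^2$ with $\alpha\in\mathbb C$ and $\gamma\in\mathbb R$. I would do this by expanding $u$ in a power series and comparing coefficients of $\mu^j\bar\mu^l$. I expect this coefficient bookkeeping, together with justifying the differentiation in $r$ (uniform convergence of derivatives on compacta), to be the main obstacle.

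Granting that step, $u_\theta$ is a polynomial of degree at most $2$ for every $\theta$. Dividing by $e^{i\theta}+e^{-i\theta}\mu$ and comparing Taylor coefficients of $k$ shows $k(\mu)=c+d\mu$ is linear. Matching against the automorphism form gives $e^{-i\theta}d=-\overline{e^{i\theta}c}$, so $d=-\bar c$. The middle coefficient $-e^{i\theta}\bar c+e^{-i\theta}c$ is then automatically purely imaginary, so the resulting condition is consistent. Hence $\fGs=c-\bar c\ld$.

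For the bound on $c$, I would compose with the royal geodesic, which passes through $(0,0)=f_0(0)$. The map $g:=G\circ f_r:\D\to\D$ satisfies $g(0)=G(0,0)=0$ and
$$g'(0)=\partial_sG(0,0)\cdot 1+\partial_pG(0,0)\cdot 0=c,$$
which gives only $|c|\leq 1$ by the Schwarz lemma. To get $1/2$, I would instead use the disc $\mu\mapsto(2\mu,\mu^2)$ rescaled into $\G$: points $(2t\mu,\ \mu^2)$ lie in $\G$ for $|t|<1$, since $2t\mu=\mu+\mu$ scaled. Alternatively, I would use the second-order term in $r$ from the family $h_r$, where Schwarz--Pick at $\mu=0$ should force $2|c|\leq 1$. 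I would first try the direct route: take $\psi(\mu)=(\mu+\mu\zeta,\mu^2\zeta)$ for $|\zeta|\leq1$, i.e. the pair $z=\mu$, $w=\mu\zeta$. Then $G\circ\psi$ fixes $0$ and has derivative $(1+\zeta)c$, so Schwarz gives $|1+\zeta||c|\leq1$, and choosing $\zeta=1$ yields $|c|\leq 1/2$.
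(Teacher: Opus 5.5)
Your proof is correct, and it takes a genuinely different route from the paper.

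The paper lifts $G$ to the bidisc via $F(z,w)=G(z+w,zw)$. It applies Knese's Schwarz lemma for the polydisc at the anti-diagonal points $(\mu,-\mu)$, which lie over $f_0(-\mu^2)$, and squares twice. This gives a pointwise estimate on $f=\partial_sG(0,\cdot)$, namely $|f(\zeta)+\zeta\overline{f(\zeta)}|\leq (1-|\zeta|^2)/2$; the displayed version in the paper has a sign misprint. A Cauchy-integral argument as $|\zeta|\to 1$ then forces $f(\zeta)=c-\bar c\zeta$, and the same estimate at $\zeta=0$ gives $|c|\leq 1/2$.

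You use only the one-variable Schwarz--Pick lemma along the flat discs $f_{re^{i\theta}}$. Because $r=0$ is an equality case, the first variation in $r$ must vanish. That condition is exactly the Killing equation of the Poincar\'e metric, whose holomorphic solutions are the quadratics $\alpha+i\gamma\mu-\bar\alpha\mu^2$. Your coefficient comparison does establish this. The differentiation in $r$ is not an obstacle: for fixed $\mu$ everything is smooth in $r$, and $h_0'(\mu)=1\neq 0$. You then get the bound separately, and very cheaply, from Schwarz's lemma applied to $G$ composed with $\mu\mapsto(2\mu,\mu^2)$, whose derivative at $0$ is $2c$.

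Your approach is more elementary and more conceptual: it needs no polydisc Schwarz lemma and no boundary limits. The paper's approach instead yields a quantitative inequality at every point of $f_0(\mathbb{D})$, which encodes both the structure and the bound at once.

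Three small clarifications:
\begin{itemize}
\item Linearity of $k$ genuinely requires at least two directions $\theta$ with distinct $e^{2i\theta}$. For $\theta=0$ alone, the admissible field $u_0=i\mu$ would give the non-linear $k=i\mu/(1+\mu)$. What rules this out is the identity $e^{i\theta}k_m+e^{-i\theta}k_{m-1}=0$ for all $m\geq 3$ and all $\theta$, so you should phrase that step via this identity rather than as ``dividing by $e^{i\theta}+e^{-i\theta}\mu$.''
\item $G\circ f_r$ gave you only $|c|\leq 1$ because the paper's $f_r(\lambda)=(\lambda,\lambda^2)$ is a misprint for $(2\lambda,\lambda^2)$, as the computations in Lemma \ref{lem:main} show. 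The corrected $f_r$ is precisely your $\psi$ with $\zeta=1$.
\item Your aside that $(2t\mu,\mu^2)\in\G$ for all $|t|<1$ is false for non-real $t$, for example $t=i/2$, but you never use it.
\end{itemize}
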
 
\begin{proof}
Consider the  complex valued functions $f$ and $F$ defined on $\D$ and $\D^2$ respectively as 
  $f(\ld):=\fGs$ and 	$F(z,w):=G(z+w,zw)$. Since $G$ is left inverse of $f_0$, it 
  is easy to see that for any $\mu \in \D$, 
  $F(\mu,-\mu)=-\mu^2$, $\partial_zF(\mu, -\mu)=f(-\mu^2)-\mu$ and 
  $\partial_wF(\mu, -\mu)=f(-\mu^2)+\mu$. Now, using the \cite[~Proposition 1.1]{knese} for
   $F$ at $(\mu, -\mu)$ and denoting $-\mu^2:=\zeta$, we get 
   $$|f(\zeta)-\mu|+|f(\zeta)+\mu|\leq 1+|\mu|^2.$$ Squaring and simplifying, we get 
   $$|f(\zeta)^2+\zeta|\leq \dfrac{|1+|\zeta|^2}{2}-|f(\zeta)|^2.$$
   Squaring again and simplifying, we get 
   \begin{equation}\label{eq:lem_flat}
   |f(\zeta)-\zeta\bar{f(\zeta)}|\leq \dfrac{1-|\zeta|^2}{2}.
   \end{equation}
For $|z|<r<1$, consider the integral 
$$\left|\dfrac{1}{2\pi i}\int_{|\zeta|=r}\dfrac{f(\zeta)+\zeta \bar{f(\zeta)}}{\zeta - \ld}
d\zeta \right|\leq \dfrac{1}{2\pi}\dfrac{r(1-r^2)}{2(r-|\ld|)}.$$
The expression on the left goes to $0$ as $r\uparrow 1$ (using Equation \ref{eq:lem_flat}). 
On the other hand, direct computations show that the integral equals 
$f(\ld)+\ld\bar{f(0)}+r^2\bar{f'(0)}$ and it tends to $f(\ld)+\ld f(0)+\bar{f'(0)}$ as
 $r\uparrow 1$. Therefore, we get $f(\ld)+\ld f(0)+\bar{f'(0)}=0$. In particular, using it 
 at $\ld=0$ gives $\bar{f'(0)}=-f(0)$ and hence the result of the lemma concludes.
\end{proof}

\begin{proposition}\label{prop:flat_Gc}
	For every $c\in \mathbb{C}$ with $|c|\leq 1/2$, the function $$G_c(s,p):=
	\dfrac{p+cs}{1+\bar{c}s}$$ is a Lempert left inverse of $f_0$ and its fibres 
	are given by $G_c^{-1}(\ld):=\{(s,p)\in \G:(c-\bar{c}\ld)s+p=\ld\}$. Moreover, 
	$\partial_sG_c(0,\ld)=c-\bar{c}\ld$ and $\partial_pG_c(0,\ld)=1.$
\end{proposition}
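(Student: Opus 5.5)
The plan is to verify directly, in order: (i) $G_c$ is holomorphic on $\G$ with values in $\D$; (ii) $G_c\circ f_0=\mathrm{Id}_{\D}$; (iii) the fibres are affine slices, with the stated description; (iv) the derivative formulas. Items (ii) and (iv) are routine. Since $f_0(\ld)=(0,\ld)$, we get $G_c(0,\ld)=\ld$. Differentiating gives
$$\partial_pG_c=\dfrac{1}{1+\bar c s},\qquad \partial_sG_c=\dfrac{c(1+\bar c s)-\bar c(p+cs)}{(1+\bar c s)^2}=\dfrac{c-\bar c p}{(1+\bar c s)^2},$$
so at $(0,\ld)$ these are $1$ and $c-\bar c\ld$. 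Holomorphy holds because $|s|<2$ on $\G$ and $|c|\le 1/2$, so $1+\bar c s\neq 0$.

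The main step is showing $|p+cs|<|1+\bar c s|$ on $\G$. The case $c=0$ is trivial, since $|p|<1$. For $c\neq0$, I would write $c=\zeta'/2\cdot t$ with $|\zeta'|=1$ and $t=2|c|\in(0,1]$. The target inequality is $|2p+ts\,\zeta'|<|2+t s\bar\zeta'|$.

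For $t=1$, multiply inside by $\bar\zeta'$ and set $\zeta=-\bar\zeta'$. The inequality becomes $|2p-s\zeta|<|2\zeta-s|$ after a unimodular rescaling, which is exactly Lemma \ref{lem:aux}. I would check this bookkeeping carefully, possibly replacing $\zeta'$ by $\bar\zeta'$ or $-\zeta'$.

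For general $t$, I would expand
$$|2+ts\bar\zeta'|^2-|2p+ts\zeta'|^2=4(1-|p|^2)+4t\,\mathrm{Re}\bigl((s-\bar s p)\zeta''\bigr)$$
for a suitable unimodular $\zeta''$; the $t^2|s|^2$ terms cancel. This is at least $4\bigl((1-|p|^2)-t|s-\bar sp|\bigr)>0$, using $t\le1$ and the bound $|s-\bar s p|<1-|p|^2$ established in the proof of Lemma \ref{lem:aux}. This handles all $|c|\le1/2$ uniformly, so I would present it that way and note that Lemma \ref{lem:aux} is the case $|c|=1/2$.

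For the fibres, $G_c(s,p)=\ld$ if and only if $p+cs=\ld+\ld\bar c s$, that is, $(c-\bar c\ld)s+p=\ld$, which is a complex affine line intersected with $\G$. Every $\ld\in\D$ is attained via $f_0$, so $G_c$ is a left inverse whose fibres are affine hyperplane sections, i.e. a Lempert left inverse. One can also confirm consistency with the definition: the fibre through $f_0(\ld)$ equals $\G\cap(f_0(\ld)+\ker G_c'(f_0(\ld)))$, since the kernel is $\{(u,v):(c-\bar c\ld)u+v=0\}$, matching the derivative computation.
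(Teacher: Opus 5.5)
Your proposal is correct and follows essentially the paper's argument: the paper also bounds $|1+\bar c s|^2-|p+cs|^2=(1-|p|^2)+2\,\mathrm{Re}\bigl(\bar c(s-\bar s p)\bigr)\ge (1-|p|^2)-2|c|\,|s-\bar s p|>0$ using $|s-\bar s p|<1-|p|^2$ on $\G$, and then reads off the fibre $(c-\bar c\ld)s+p=\ld$ and identifies it with $\G\cap\bigl(f_0(\ld)+\ker G_c'(f_0(\ld))\bigr)$. Your separate $t=1$ reduction to Lemma \ref{lem:aux} is not needed, because the uniform estimate already covers $|c|=1/2$; if you keep it, the correct substitution is $\zeta=-\zeta'$, not $\zeta=-\bar\zeta'$.
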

\begin{proof}
Fix $c\in \mathbb{C}$, it is easy to see that 
$$|1+\bar{c}s|^2-|p+cs|^2\geq (1-|p|^2)-2|c||s-p\bar{s},|$$  and that the 
expression on the left is positive for $|c|\leq 1/2$ (using \cite[~Theorem 2.1(3)]{agler}). Therefore, 
the image of $G_c$ lies within $\D$. Also, since $G_c(0,\ld)=\ld$, it follows that 
it is left inverse of $f_0$. Also, it is straightforward to 
verify that $\partial_sG_c(0,\ld)=c-\bar{c}\ld$ and $\partial_pG_c(0,\ld)=1.$ 
In order to prove that it is actually a Lempert left inverse 
of $f_0$, we claim that 
$$G_c^{-1}(\ld)=\G\cap \left(f_0(\ld)+ \text{ker}G'_c(f_0(\ld))\right).$$ 
In fact, 
\begin{align*}
(s,p)\in G_c^{-1}(\ld) &\iff (c-\bar{c}\ld)s+p=\ld\\
&\iff \left(\partial_sG_c(f_0(\ld)), \partial_pG_c(f_0(\ld))\right)\bullet ((s,p)-f_0(\ld))=0\\
&\iff (s,p)-f_0(\ld) \in \text{Ker} G'(f_0(\ld)).
\end{align*}

Thus, each fibre of $G_c$ is intersection of an affine hyperplane with $\G$ and hence 
$G_c$ is a Lempert left invrese of $f_0$.
\end{proof}

\begin{remark}
Observe that $G_c= \Psi_{-2c}$, where $\Psi_w:\G\to \mathbb{D}$ is as defined in 
\cite[~Equation 4.9]{zwonek}. Zwonek  proved that this family consists of Lempert 
left inverses of $f_0$. We prove that every Lempert left inverse of $f_0$ are of this form 
in the following theorem.
  \end{remark}

\begin{theorem}\label{thm:flat_every}
Every Lempert left inverse $G\to \C$ of $f_0$ is of the form $G_c$ for a unique 
$c\in \mathbb{C}$ with $|c|\leq 1/2.$

\end{theorem}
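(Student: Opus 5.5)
The plan follows the proof of Theorem \ref{thm:royal_every}: identify the fibres of $G$ with those of a suitable $G_c$ and conclude from equality of fibres.

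Let $G$ be a Lempert left inverse of $f_0$. By Lemma \ref{lem:flat_main} there is a unique $c\in\mathbb{C}$ with $|c|\leq 1/2$, namely $c=\partial_sG(0,0)$, such that
$$\partial_sG(0,\ld)=c-\bar c\ld,\qquad \partial_pG(0,\ld)=1\qquad(\ld\in\D).$$
In particular $G'(f_0(\ld))\neq 0$, so $\text{Ker}\,G'(f_0(\ld))$ is a genuine complex line. Since $G$ is a Lempert left inverse, for every $\ld\in\D$ we have
$$G^{-1}(\ld)=\G\cap\left(f_0(\ld)+\text{Ker}\,G'(f_0(\ld))\right).$$
A point $(s,p)$ lies in $f_0(\ld)+\text{Ker}\,G'(f_0(\ld))$ if and only if
$$(c-\bar c\ld)(s-0)+1\cdot(p-\ld)=0,\quad\text{i.e.}\quad (c-\bar c\ld)s+p=\ld.$$
By Proposition \ref{prop:flat_Gc}, this is exactly the condition describing $G_c^{-1}(\ld)$. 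Hence $G^{-1}(\ld)=G_c^{-1}(\ld)$ for every $\ld\in\D$.

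Both $G$ and $G_c$ map $\G$ into $\D$, so $\G$ is the disjoint union of the fibres over $\ld\in\D$ for each map. Take any $(s,p)\in\G$ and set $\ld=G(s,p)$. Then $(s,p)\in G^{-1}(\ld)=G_c^{-1}(\ld)$, so $G_c(s,p)=\ld$. Therefore $G=G_c$.

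For uniqueness, suppose $G_c=G_{c'}$. Then $c=\partial_sG_c(0,0)=\partial_sG_{c'}(0,0)=c'$ by the derivative formula in Proposition \ref{prop:flat_Gc}. Alternatively, uniqueness follows directly from the uniqueness part of Lemma \ref{lem:flat_main}.

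The only substantive input is Lemma \ref{lem:flat_main}, which determines the derivatives of $G$ along $f_0$ from the left-inverse property alone. Once that is available, the argument reduces to the fibre comparison above, and no serious obstacle remains. The one point needing care is that the kernels are lines rather than all of $\mathbb{C}^2$, and this is guaranteed by $\partial_pG=1$.
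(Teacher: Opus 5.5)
Your proposal is correct and follows the paper's proof essentially step for step: you invoke Lemma \ref{lem:flat_main} to get $\partial_sG(0,\ld)=c-\bar c\ld$ and $\partial_pG(0,\ld)=1$, then match the fibres $\G\cap\left(f_0(\ld)+\text{Ker}\,G'(f_0(\ld))\right)$ with those of $G_c$ from Proposition \ref{prop:flat_Gc}. Your additions (noting that $\partial_pG=1$ makes the kernel a genuine line, spelling out how equal fibres give $G=G_c$, and proving uniqueness via $c=\partial_sG_c(0,0)$) only make explicit steps the paper leaves implicit.
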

\begin{proof}
	Since $G$ is left inverse of $f_0$, using Lemma \ref{lem:flat_main} there is 
a unique	$c\in \mathbb{C} \, (c=\partial_sG(0,0))$ with $|c|\leq 1/2$ and that 	$\partial_sG_c(0,\ld)=
	c-\bar{c}\ld$ and $\partial_pG_c(0,\ld)=1.$ W claim to show that $G=G_c$. Observe that 
	it is enough to show that $G^{-1}(\ld)=G_c^{-1}(\ld)$ for each $\ld\in \D$. In fact, 
	$(s,p)\in G^{-1}(\ld)=\G\cap \{f_0(\ld)+\text{Ker}G'(f_0(\ld))\}$ if and only if 
	$G'(f_0(\ld))\bullet (s,p-\ld)=0$. Which upon using the expression of partial 
	derivatives of G stated above simplifies to $(c-\bar{c}\ld)s+p=\ld$. This 
	using the expression of fibres of $G_c^{-1}$ mentioned in Proposition 
	\ref{prop:flat_Gc} completes the proof.
\end{proof}

\begin{theorem}\label{thm:connected_flat}
	The collection $\mathcal{L}_f$ of all Lempert left inverses of the flat geodesic 
	$f_{0}$ of the symmetrized bidisc is path connected.
\end{theorem}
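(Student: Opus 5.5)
By Theorem \ref{thm:flat_every} together with Proposition \ref{prop:flat_Gc}, the map $c\mapsto G_c$ is a bijection from the closed disc $\overline{D}(0,1/2)=\{c\in\mathbb{C}:|c|\leq 1/2\}$ onto $\mathcal{L}_f$. The plan is to transport path connectedness from this convex parameter set to $\mathcal{L}_f$. It also helps to fix the topology on $\mathcal{L}_f$ first. The natural choice, and the one implicit in Theorem \ref{thm:connected_royal}, is the topology of locally uniform convergence on $\G$ (the compact-open topology).

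Given $G_0,G_1\in\mathcal{L}_f$, Theorem \ref{thm:flat_every} gives $c_0,c_1$ with $|c_j|\leq 1/2$ and $G_j=G_{c_j}$. For $t\in[0,1]$ put $c_t:=(1-t)c_0+tc_1$. By the triangle inequality $|c_t|\leq (1-t)|c_0|+t|c_1|\leq 1/2$. Proposition \ref{prop:flat_Gc} then shows that each $G_{c_t}$ is a Lempert left inverse of $f_0$, so $t\mapsto G_{c_t}$ is a path in $\mathcal{L}_f$ from $G_0$ to $G_1$.

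The only point needing an argument is that this path is continuous in the chosen topology. Fix a compact set $K\subseteq\G$. The function $(c,s,p)\mapsto (p+cs)/(1+\bar c s)$ is continuous on $\overline{D}(0,1/2)\times\G$: the denominator never vanishes there, since $|\bar c s|\leq |s|/2<1$ because $|s|<2$ on $\G$. A continuous function on the compact set $\overline{D}(0,1/2)\times K$ is uniformly continuous, so $G_{c}\to G_{c'}$ uniformly on $K$ as $c\to c'$. Composing with the continuous map $t\mapsto c_t$ gives the required continuity. If a pointwise topology is intended instead, continuity is immediate from the same formula.

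I expect no real obstacle here, because the convexity of the parameter set carries the argument, just as in Theorem \ref{thm:connected_royal}. The one point to state carefully is that the denominator $1+\bar c s$ stays bounded away from $0$ uniformly on compact subsets of $\G$ for all $|c|\leq 1/2$.
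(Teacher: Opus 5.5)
Your proposal is correct and follows the same route as the paper. The paper simply says the argument mirrors the royal case: parametrize $\mathcal{L}_f$ by the closed disc $|c|\le 1/2$ via Theorem \ref{thm:flat_every} and Proposition \ref{prop:flat_Gc}, and join $G_{c_0}$ to $G_{c_1}$ along the segment $c_t=(1-t)c_0+tc_1$. Your added check that $t\mapsto G_{c_t}$ is continuous in the compact-open topology, using $|1+\bar c s|\ge 1-|s|/2>0$ on $\G$, covers a point the paper leaves implicit, and you handle it correctly.
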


\begin{proof}
	Proof follows on the similar lines as in the case of royal geodesics case using 
	Proposition \ref{prop:flat_Gc} and Theorem \ref{thm:flat_every}.
\end{proof}
For any $a \in \mathbb{D}$, we denote by $\phi_a$  the disc automorphism of $\mathbb{D}$ 
with  $\phi_a(a)=0$. We define $T:\G\to \G$ as 
$$T(s,p)=T(z+w,zw):=(\phi_a(z)+\phi_a(w),\phi_a(z)\phi_a(w)).$$
\begin{proposition}
For a fixed $\beta \in  \mathbb{D}$, consider the flat geodesic $f_{\beta}$ of $\G$ given by 
$f_{\beta}(\ld):=(\beta+\bar{\beta}\ld, \ld)$. Then a map 
$K:\G\to \mathbb{D}$ is Lempert left inverse of $f_{\beta}$ if and only if 
$K=\phi_{a^2}^{-1}\circ G_c\circ T$,  for some unique $c$ with $|c|\leq 1/2$, 
where $G_c$	is as discussed in Proposition \ref{prop:flat_Gc} and $T$ is as defined above.
\end{proposition}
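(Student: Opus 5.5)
The plan is to reduce the statement to the normalized case (Proposition \ref{prop:flat_Gc} and Theorem \ref{thm:flat_every}) by conjugating with the automorphism $T$. The statement does not specify how $a$ and $\beta$ are related; I will take $a\in\D$ with $\beta=\dfrac{2a}{1+|a|^2}$, as in the proof of Lemma \ref{lem:main}. This choice is natural because the point $(z,w)=(a,a)$, i.e. $(s,p)=(2a,a^2)$, then lies on $f_\beta(\D)$ at the parameter $\ld=a^2$ (indeed $2a=\beta+\bar\beta a^2$), and $T$ sends it to $(0,0)=f_0(0)$.

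\textbf{Step 1: $T$ is a projective linear automorphism.} A direct computation with $\phi_a(z)=\dfrac{z-a}{1-\bar a z}$ gives
$$T(s,p)=\left(\frac{s-2\bar a p-2a+|a|^2 s}{1-\bar a s+\bar a^2 p},\ \frac{p-a s+a^2}{1-\bar a s+\bar a^2 p}\right).$$
Both coordinates are ratios of affine functions of $(s,p)$ with a common denominator. Hence $T$ and $T^{-1}$ (which is of the same type, built from $\phi_a^{-1}$) map intersections of $\G$ with complex affine hyperplanes onto such intersections. Here one uses that for $\ell(s,p)=\alpha s+\beta' p+\gamma$, the equation $\ell\circ T^{-1}=0$ becomes an affine equation once the denominator is cleared.

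\textbf{Step 2: $T$ transports $f_\beta$ to $f_0$.}
\begin{itemize}
\item The image $f_\beta(\D)=\{s=\beta+\bar\beta p\}\cap\G$ is an affine line section through $(2a,a^2)$. By Step 1, $T(f_\beta(\D))$ is again an affine line section, passing through $(0,0)$.
\item To see that this line is $\{s=0\}$, I will plug the affine relation into the first coordinate of $T$. Alternatively, I will check that $T$ maps the flat geodesic through $(2a,a^2)$ to a flat geodesic, since $T$ commutes with the symmetric structure and preserves the royal variety $\{s^2=4p\}$.
\item Given this, $T\circ f_\beta=f_0\circ m$ for some $m\in\mathrm{Aut}(\D)$ with $m(a^2)=0$, so $m=\omega\,\phi_{a^2}$ with $|\omega|=1$.
\item I would compute $\omega$ by differentiating the $p$-coordinate of $T\circ f_\beta$ at $\ld=a^2$. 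I expect $\omega=1$. If a unimodular factor does survive, it can be absorbed by replacing $c$ with a rotated constant, since $G_{\omega c}$ corresponds to $G_c$ conjugated by rotation.
\end{itemize}
This identification of $m$ is the main computational obstacle. It is also where the exact normalization of $a$ relative to $\beta$ has to be fixed.

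\textbf{Step 3: conclude.} Given $K:\G\to\D$, set $L:=\phi_{a^2}\circ K\circ T^{-1}$.
\begin{itemize}
\item \emph{Left inverses correspond.} We have $L\circ f_0=\phi_{a^2}\circ K\circ f_\beta\circ\phi_{a^2}^{-1}$. Hence $L$ is a left inverse of $f_0$ if and only if $K$ is a left inverse of $f_\beta$.
\item \emph{Lempert property corresponds.} The fibres satisfy $L^{-1}(\mu)=T\big(K^{-1}(\phi_{a^2}^{-1}(\mu))\big)$. By Step 1, these are affine hyperplane sections of $\G$ exactly when the fibres of $K$ are. So $K$ is a Lempert left inverse of $f_\beta$ if and only if $L$ is a Lempert left inverse of $f_0$.
\item \emph{Identification.} By Proposition \ref{prop:flat_Gc} and Theorem \ref{thm:flat_every}, the latter holds if and only if $L=G_c$ for a unique $c$ with $|c|\le 1/2$. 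This is equivalent to $K=\phi_{a^2}^{-1}\circ G_c\circ T$.
\item \emph{Uniqueness.} Since $\phi_{a^2}$ and $T$ are bijections, $c$ is determined by $K$, and uniqueness transfers directly from Theorem \ref{thm:flat_every}.
\end{itemize}
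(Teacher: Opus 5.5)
Your proposal is correct and follows the paper's proof. You set $L=\phi_{a^2}\circ K\circ T^{-1}$, use $T\circ f_\beta=f_0\circ\phi_{a^2}$ together with the fact that $T^{\pm1}$ carry affine line sections of $\G$ to affine line sections, and invoke Proposition~\ref{prop:flat_Gc} and Theorem~\ref{thm:flat_every}. Your normalization $\beta=2a/(1+|a|^2)$ and the explicit projective formula for $T$ supply details the paper leaves implicit.

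Your Step~2 computation, with $\phi_a(z)=(z-a)/(1-\bar a z)$, gives exactly $T(\beta+\bar\beta\ld,\ld)=(0,\phi_{a^2}(\ld))$, so $\omega=1$. This matters, because your rotated-$c$ fallback would not have rescued the stated form. If $\omega\neq1$, then, since $G_c(0,\mu)=\mu$ for every $c$, no map $\phi_{a^2}^{-1}\circ G_c\circ T$ would even be a left inverse of $f_\beta$.
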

\begin{proof}
Let $K$ be a Lempert left inverse of $f_{\beta}$. Consider $L:\G \to \mathbb{D}$ 
defined as $L:=\phi_{a^2}\circ K\circ T^{-1}$. It follows  that
$T\circ f_{\beta}(\ld)=(0, \phi_{a^2}(\ld))=f_0\circ \phi_{a^2}(\ld)$ for 
any $\ld \in \mathbb{D}$. Therefore, $ T^{-1}\circ f_0=f_{\beta}\circ \phi_{a^2}^{-1}.$ 
This implies that 
\begin{align*}
L\circ f_0&=\phi_{a^2}\circ K\circ T^{-1}\circ f_0\\
&=\phi_{a^2}\circ K\circ 
f_{\beta}\circ \phi_{a^2}^{-1}\\
&=\phi_{a^2}\circ \text{Id}_{\mathbb{D}}\circ \phi_{a^2}^{-1}=\text{Id}_{\mathbb{D}}.
\end{align*} 
Also, $L^{-1}=T\circ K^{-1} \circ \phi_{a^2}^{-1}$. Since $T$ preserve affine hyperplane 
sections and $K$ being Lempert left inverse its fibres are  intersection of 
$\G$ with its affine complex hyperplanes, we obtain that $L$ is Lempert left inverse of $f_0$.
Hence,  using Theorem \ref{thm:flat_every}, $L=G_c$ for some $c\in \mathbb{C}$ with $|c|\leq 1/2$ and we obtain that  
$K=\phi_{a^2}^{-1}\circ G_c \circ T$. 

Conversely, assume that $K=\phi_{a^2}^{-1}\circ G_c\circ T$ for some $c\in \mathbb{D}$ with 
$|c|\leq 1/2$.  Since $T\circ f_{\beta}(\ld)=(0, \phi_{a^2}(\ld))$, therefore
$G_c\circ T\circ f_{\beta}(\ld)=\phi_{a^2}(\ld)$. Hence, 
$\phi_{a^2}^{-1}\circ G_c\circ T\circ f_{\beta}(\ld)=\text{Id}_{\mathbb{D}}(\ld)$. Also,
 for each $\ld \in \D$, 
$K^{-1}(\ld)=T^{-1}\circ G_c^{-1}\circ \phi_{a^2}$. Since $G_c$ is Lempert 
left inverse of $f_0$ and $T^{-1}$ preserves affine fibres, the proof concludes. 
\end{proof}

\begin{remark}
	The proof for path connectedness property of the collection $\mathcal{L}_{\beta}$ of all Lempert left 
	inverses of the geodesic $f_{\beta}$ follows on the similar lines as in Theorem 
	\ref{thm:connected_royal} and Theorem \ref{thm:connected_flat}.
\end{remark}



\medskip


\begin{thebibliography}{00}
	\smallskip
	
	
	\bibitem{agler} J. Agler, N. J. Young, The hyperbolic geometry of ths symmetrized 
	bidisc,  J. Geom. Anal., {\bf 14}, 2004, 375--403.
	
	\bibitem{costara} C. Costara, The symmetrized bidisc and Lempert's theorem, 
	Bull. London Math. Soc., {\bf 36}, 2004, 656--662.
	
	\bibitem{armen} A. Edigarian, A note on Costara's paper: The symmetrized bidisc and 
	Lempert's theorem, Ann. Polon. 
	Math., {\bf 83}, 2004, 189--191.
	
	\bibitem{knese} G. Knese, A Schwarz Lemma on the polydisk, Proc. Amer. Math. Soc., 
	{\bf 135}(9), 2007, 2759--2768.

\bibitem{lempert} L. Lempert, La metrique de Kobayashi et la repr\'esentation des 
domaines sur la boul\'e , Bull. Soc. Math. France {\bf 109}(4), 1981,  427--474.
\bibitem{lempert1}  L. Lempert, Holomorphic retracts and intrinsic metrics in convex 
domains, Anal. Math. 8, 1982, 257--261.
\bibitem{lempert2} L. Lempert, Intrinsic distances and holomorphic retracts, in Complex
analysis and applications’81 (Varna,
1981), 341--364, Publ. House Bulgar. Acad. Sci., Sofia, 1984.	

	
\bibitem{zwonek} W. Zwonek, On left inverses in the Lemepert theorem, Proc. Amer. Math. 
	Soc., 2027.
	


	
	
	
	
\end{thebibliography}
\end{document}